\newtheorem{theorem}{Theorem}[section]
\newtheorem{prop}[theorem]{Proposition}
\newtheorem{conjecture}[theorem]{Conjecture}
\newtheorem{lemma}[theorem]{Lemma}
\theoremstyle{definition}
\newtheorem{definition}{Definition}
\begin{document}

\title{Sums of Cubes in Quaternion Rings}
\date{}

\maketitle
\begin{center}
Madison Gamble, Spencer Hamblen, Blake Schildhauer, and Chung Truong

Department of Mathematics and Computer Science

McDaniel College

Westminster, MD  21157

USA

mlg011@connections.mcdaniel.edu

shamblen@mcdaniel.edu

blakeschildhauer@gmail.com

hanhchung.truong@stella.edu.vn
\end{center}

\noindent 2000 {\em Mathematics Subject Classification}:  11P05; 11R52

\noindent{\em Keywords}:  Waring's Problem, quaternion integer, sums of cubes.

\begin{abstract}
We investigate a version of Waring's Problem over quaternion rings, focusing on cubes in quaternion rings with integer coefficients.  We determine the global upper and lower bounds for the number of cubes necessary to represent all such quaternions.
\end{abstract}

\section{Introduction and Definitions}

\begin{theorem}[Waring's Problem/Hilbert-Waring Theorem]
For every integer $k \geq 2$ there exists a positive integer $g(k)$ such that every positive integer is the sum of at most $g(k)$ $k$-th powers of integers.
\end{theorem}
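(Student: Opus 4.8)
The plan is to prove only the \emph{existence} of $g(k)$, for which the Schnirelmann density method is better suited than any asymptotic analysis. For a set $A \subseteq \mathbb{N}_0$ with $0 \in A$, let $A(n) = \#\{a \in A : 1 \le a \le n\}$ and define the Schnirelmann density $\sigma(A) = \inf_{n \ge 1} A(n)/n$. I would first establish the two standard structural facts: (i) if $\sigma(A) + \sigma(B) \ge 1$ then $A + B \supseteq \mathbb{N}$; and (ii) Schnirelmann's inequality $1 - \sigma(A+B) \le (1-\sigma(A))(1-\sigma(B))$. Iterating (ii) gives $1 - \sigma(\underbrace{A+\cdots+A}_{h}) \le (1-\sigma(A))^h \to 0$, so once $h$ is large enough that the $h$-fold sumset has density exceeding $\tfrac12$, a single application of (i) forces the $2h$-fold sumset to be all of $\mathbb{N}$. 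Thus any $A \ni 0$ with $\sigma(A) > 0$ is automatically an additive basis of finite order.

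With this machinery in place it suffices to exhibit one $s = s(k)$ for which the set $sC_k$ of integers representable as a sum of $s$ terms from $C_k = \{x^k : x \ge 0\}$ satisfies $\sigma(sC_k) > 0$; then $sC_k$ is a basis of some order $2h$, and $g(k) = 2hs$ works. To bound the density from below I would study the representation function $r_s(n) = \#\{(x_1,\dots,x_s)\in\mathbb{N}_0^s : x_1^k + \cdots + x_s^k = n\}$ by a second-moment argument. Counting $s$-tuples with each $x_i \le (N/s)^{1/k}$ gives $\sum_{n \le N} r_s(n) \gg N^{s/k}$, while $\sum_{n \le N} r_s(n)^2$ is the number of solutions of $x_1^k + \cdots + x_s^k = y_1^k + \cdots + y_s^k$ with all variables $\le N^{1/k}$. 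Granting the bound $\sum_{n\le N} r_s(n)^2 \ll N^{2s/k-1}$, Cauchy--Schwarz yields
\[
\#\{\, n \le N : r_s(n) > 0 \,\} \;\ge\; \frac{\bigl(\sum_{n\le N} r_s(n)\bigr)^2}{\sum_{n\le N} r_s(n)^2} \;\gg\; \frac{N^{2s/k}}{N^{2s/k-1}} \;=\; N
\]
uniformly in $N$. Since $1 = 1^k \in sC_k$ forces $(sC_k)(n) \ge 1$ for every $n$, the finitely many small values of $n$ cannot drag the infimum to zero, and we conclude $\sigma(sC_k) > 0$.

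The crux, and the step I expect to be the main obstacle, is the second-moment estimate $\sum_{n\le N} r_s(n)^2 \ll N^{2s/k-1}$: the assertion that the diagonal equation has only the expected order of magnitude of solutions. For small $s$ this is a genuine Vinogradov mean-value problem, but for mere finiteness I would take $s$ comfortably large (a sufficiently high power of $2$ suffices) and proceed crudely, inducting on $s$ by splitting a tuple in half and controlling, on average, the number of $(x,y)$ with $x^k - y^k$ equal to a fixed nonzero value; once $s/k$ is large enough to beat the diagonal contribution the $L^2$ norm stays in range. A wholly different and more algebraic route is Hilbert's original argument via polynomial identities expressing $(x_1^2+\cdots+x_5^2)^k$ as a positive-rational combination of $(2k)$-th powers of linear forms, combined with Lagrange's four-square theorem; I mention it as the classical alternative, but the density method above is cleaner to write out. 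Either way the resulting $g(k)$ is far from optimal, and sharpening it is precisely the task for which the Hardy--Littlewood circle method is designed but which is not needed here.
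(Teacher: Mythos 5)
The paper does not prove this statement: it is the classical Hilbert--Waring theorem (Hilbert, 1909), quoted purely as background motivation, so there is no in-paper argument to compare yours against. Judged on its own terms, your skeleton is the standard Linnik--Schnirelmann proof and is correctly assembled: the two density lemmas, the passage from positive Schnirelmann density to a basis of finite order, the first-moment bound $\sum_{n \le N} r_s(n) \gg N^{s/k}$, the Cauchy--Schwarz step, and the treatment of small $n$ via $1 \in sC_k$ are all in order.

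The gap is exactly where you predicted it, but it is more serious than you allow: the fallback you sketch for $\sum_{n \le N} r_s(n)^2 \ll N^{2s/k-1}$ does not work. Splitting off one pair $(x_1,y_1)$ and using the divisor bound $\#\{(x,y) : x^k - y^k = v\} \ll_{\varepsilon} N^{\varepsilon}$ for $v \ne 0$ bounds the off-diagonal contribution only by the trivial count $N^{\varepsilon}\sum_v R_{s-1}(v) \ll N^{2(s-1)/k+\varepsilon}$, and $2s/k - 2/k$ exceeds the target exponent $2s/k - 1$ for every $k \ge 3$; no choice of $s$, however large, repairs this, because the loss is independent of $s$. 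The estimate genuinely requires cancellation --- e.g.\ Hua's lemma with $s = 2^{k-1}$, proved by Weyl differencing (which reduces the degree of the polynomial rather than splitting the tuple), or Linnik's original argument --- and either is a substantial piece of analysis, not a crude induction. Since the entire difficulty of the theorem is concentrated in this one estimate, the proposal as written establishes only the (correct, and standard) reduction of Waring's problem to it; if you want a self-contained write-up you must either prove the mean-value bound properly or carry out Hilbert's identity argument, which you mention but do not develop.
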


The idea behind Waring's Problem -- examining sums of powers -- can be easily extended to any ring.  (For example, number fields \cite{siegel} and polynomial rings over finite fields \cite{newman}.)  For an excellent and thorough exposition of the research on Waring's Problem and its generalizations, see Vaughan and Wooley \cite{wooley}.  We will specifically look at sums of cubes in quaternion rings, extending the previous work on sum of squares begun in Cooke, Hamblen, and Whitfield \cite{chw}.

\begin{definition} Let $LQ_{a,b}$ denote the quaternion ring
\[ \{\alpha_0 + \alpha_1 {\bf i} + \alpha_2 {\bf j} + \alpha_3 {\bf k} \mid \alpha_n,a,b \in {\mathbb Z}, {\bf i}^2 = -a, {\bf j}^2 = -b, {\bf i}{\bf j}=-{\bf j}{\bf i}={\bf k}\}.\]
Let $LQ_{a,b}^n$ denote the additive group generated by all $n$th powers in $LQ_{a,b}$.
\end{definition}

Note here that ${\bf k}^2 = -ab$, and that if $a = b = 1$, we have the {\em Lipschitz quaternions}.  We then have the following analogue of Waring's Problem.

\begin{conjecture}
For every integer $k \geq 2$ and all positive integers $a,b$ there exists a positive integer $g_{a,b}(k)$ such that every element of $LQ_{a,b}^k$ can be written as the sum of at most $g_{a,b}(k)$ $k$-th powers of elements of $LQ_{a,b}$.
\end{conjecture}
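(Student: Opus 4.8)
The plan is to prove the conjecture for odd $k$ by an elementary reduction to the classical Hilbert--Waring Theorem (Theorem~1.1), and to isolate the even case as the genuine obstruction. First I would record the multiplicative structure. Writing $\alpha=\alpha_0+{\bf v}$ with ${\bf v}=\alpha_1{\bf i}+\alpha_2{\bf j}+\alpha_3{\bf k}$, a direct computation gives ${\bf v}^2=-N({\bf v})$ where $N({\bf v})=a\alpha_1^2+b\alpha_2^2+ab\alpha_3^2$ is a nonnegative integer scalar (all mixed terms cancel because ${\bf i}{\bf j}+{\bf j}{\bf i}=0$, and similarly for the other pairs). Consequently each $\alpha$ lies in the commutative order $\mathbb{Z}[\alpha]$, and an easy induction yields $\alpha^k=A_k+B_k{\bf v}$ for integer polynomials $A_k,B_k$ in $(\alpha_0,N({\bf v}))$. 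In particular ${\bf i}^k={\bf i}\,({\bf i}^2)^{(k-1)/2}=(-a)^{(k-1)/2}{\bf i}$ for odd $k$, and likewise ${\bf j}^k=(-b)^{(k-1)/2}{\bf j}$ and ${\bf k}^k=(-ab)^{(k-1)/2}{\bf k}$; these are pure vectors in the three coordinate directions.

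Next I would build a full-rank sublattice that is cheap to represent. For odd $k$ set
\[ \Lambda_0=\langle\,1^k,\ {\bf i}^k,\ {\bf j}^k,\ {\bf k}^k\,\rangle_{\mathbb Z}\subseteq LQ_{a,b}^k. \]
Because the generators are, respectively, a nonzero scalar and nonzero multiples of ${\bf i},{\bf j},{\bf k}$, the lattice $\Lambda_0$ has full rank, hence finite index, in $LQ_{a,b}^k\subseteq\mathbb Z^4$. The key point is that every element of $\Lambda_0$ is a \emph{bounded} sum of $k$th powers: since $(t\,{\bf i})^k=t^k{\bf i}^k$ (and the same for ${\bf j},{\bf k}$, as scalars are central), summing over several values of $t$ gives $\bigl(\textstyle\sum t_i^k\bigr){\bf i}^k$, and by Theorem~1.1 together with $(-t)^k=-t^k$ (valid as $k$ is odd) any integer coefficient is a sum of at most $g(k)$ integer $k$th powers. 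Treating the scalar direction with ordinary powers $t_i^k$ in the same way, every element $s\cdot1^k+p\,{\bf i}^k+q\,{\bf j}^k+r\,{\bf k}^k$ of $\Lambda_0$ is a sum of at most $4g(k)$ $k$th powers.

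To finish the odd case I would handle the finitely many cosets. Let $r_1,\dots,r_t\in LQ_{a,b}^k$ represent the (finitely many) cosets of $\Lambda_0$ in $LQ_{a,b}^k$. Each $r_j$ is \emph{some} finite sum of $k$th powers --- for odd $k$ the additive group $LQ_{a,b}^k$ is exactly the set of finite sums of $k$th powers, since $-x^k=(-x)^k$ removes any need for subtraction --- so $r_j$ costs at most $c_0$ powers, where $c_0$ is the maximum over the finitely many representatives. Given any $\beta\in LQ_{a,b}^k$, write $\beta=r_j+\lambda$ with $\lambda\in\Lambda_0$; then $\beta$ is a sum of at most $c_0+4g(k)$ $k$th powers, so $g_{a,b}(k)=c_0+4g(k)$ suffices.

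The hard part is the even case, and I expect it to be the main obstacle. When $k$ is even, ${\bf i}^k=(-a)^{k/2}$, ${\bf j}^k=(-b)^{k/2}$, and ${\bf k}^k=(-ab)^{k/2}$ are all \emph{scalars}, so the cheap sublattice $\Lambda_0$ collapses onto the scalar line and the argument above breaks down entirely. Pure vectors still lie in $LQ_{a,b}^k$ --- for instance $\alpha^k-\bar\alpha^k=2B_k{\bf v}$, using the conjugate $\bar\alpha=\alpha_0-{\bf v}$ --- but this is a \emph{difference} of $k$th powers, and because $(-\bar\alpha)^k=\bar\alpha^k$ for even $k$ one cannot convert it to a bounded \emph{sum} of powers for free. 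Reaching the vector directions now forces $\alpha_0$ and ${\bf v}$ to be simultaneously nonzero, entangling the scalar part $A_k(\alpha_0,N)$ with the vector part $B_k(\alpha_0,N){\bf v}$, with no sign control available. Establishing a uniform bound for these coupled one-parameter value sets is exactly a Waring-type statement for the imaginary quadratic orders $\mathbb Z[\sqrt{-a}]$, $\mathbb Z[\sqrt{-b}]$, and $\mathbb Z[\sqrt{-ab}]$, and it appears to require either Hilbert-style polynomial identities or the circle method rather than the soft finite-index argument that settles the odd case. It is precisely this difficulty that makes the restriction to cubes ($k=3$, odd) in the sequel the tractable first step.
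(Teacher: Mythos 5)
First, a framing point: the statement you are proving is presented in the paper as a \emph{conjecture}. The paper offers no proof of it for general $k$; it only establishes the case $k=3$ (Theorem \ref{mainthm}), via the explicit identities (\ref{cube1}) and (\ref{cube2}) and congruence analysis modulo $6$. So there is no ``paper proof'' to compare against, and your argument must be judged on its own terms. On those terms, your odd-$k$ argument is correct and genuinely goes beyond the paper. The lattice $\Lambda_0=\langle 1,\ (-a)^{(k-1)/2}{\bf i},\ (-b)^{(k-1)/2}{\bf j},\ (-ab)^{(k-1)/2}{\bf k}\rangle$ has full rank since $a,b>0$; it lies in $LQ_{a,b}^k$; each of its elements is a sum of at most $4g(k)$ $k$-th powers because scalars are central, Theorem 1.1 applies to each integer coefficient, and $(-t)^k=-t^k$ handles signs; and the finitely many cosets cost a bounded amount because, for odd $k$, the additive group generated by $k$-th powers coincides with the set of finite sums of $k$-th powers. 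This is, in effect, an ineffective version of the skeleton the paper uses for $k=3$ (a cheap sublattice --- there, the multiples of $6$ and $6z+3$ covered by (\ref{cube1}) and (\ref{cube2}) --- plus a bounded cost to reach each residue class), but it yields no explicit constant: your $c_0$ exists but is not controlled, whereas the paper's whole effort is in pinning the bound down to $5$ or $6$.

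The genuine gap, which you have correctly diagnosed yourself, is that the conjecture quantifies over \emph{all} $k\ge 2$ and your argument gives nothing for even $k$. Both pillars fail simultaneously there: ${\bf i}^k$, ${\bf j}^k$, ${\bf k}^k$ become scalars, so no full-rank sublattice of pure powers is available, and $(-x)^k=x^k$, so elements of $LQ_{a,b}^k$ are integer combinations of $k$-th powers with possibly negative coefficients that cannot be converted into bounded genuine sums. (The case $k=2$ is settled in \cite{chw}, but by quadratic-form methods that do not obviously generalize to $k=4,6,\dots$.) The honest conclusion is therefore not a proof of the conjecture but a partial result: the conjecture holds for every odd $k$, with $g_{a,b}(k)\le c_0+4g(k)$, while the even case remains open. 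That partial result is worth stating as a theorem in its own right, but it should not be presented as a resolution of the conjecture as written.
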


In contrast with the case when $k=2$, it is much harder when an element of a ring can be represented as a sum of a small number of cubes.  For example, it was only recently determined \cite{booker} that 33 is the sum of 3 integer cubes.  Our goal in this paper, therefore, is to determine global upper and lower bounds for $g_{a,b}(3)$, the number of cubes necessary to represent all elements of $LQ_{a,b}^3$.  We have the following main result.

\begin{theorem} \label{mainthm}
Let $a,b$ be positive integers. Then
\begin{itemize}
	\item if $3 \nmid a$ or $3 \nmid b$, then $3 \leq g_{a,b}(3) \leq 6$, and
	\item if $3 \mid a$ and $3 \mid b$, then $4 \leq g_{a,b}(3) \leq 5$.
\end{itemize}
\end{theorem}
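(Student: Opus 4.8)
The plan is to base everything on an explicit formula for a single cube. Writing $\alpha = \alpha_0 + v$ with $v = \alpha_1\mathbf i + \alpha_2 \mathbf j + \alpha_3\mathbf k$, a direct computation using the given relations shows that all mixed terms in $v^2$ cancel, so $v^2 = -N$ is the scalar with $N = a\alpha_1^2 + b\alpha_2^2 + ab\alpha_3^2$. Hence
\[ \alpha^3 = (\alpha_0^3 - 3\alpha_0 N) + (3\alpha_0^2 - N)\,v. \]
My first step is to record this identity as a lemma, since it reduces the whole problem to understanding the scalar $\sigma = \alpha_0^3 - 3\alpha_0 N$ and the single integer multiplier $\lambda = 3\alpha_0^2 - N$ that scales the pure part $v$. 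A key structural consequence is that the pure part of every cube is an integer multiple of the integer vector $(\alpha_1,\alpha_2,\alpha_3)$.

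Next I would pin down the group $LQ_{a,b}^3$ itself. Reducing modulo $3$ gives $\lambda \equiv -N \pmod 3$, and $N\equiv 0 \pmod 3$ for all choices of the $\alpha_i$ exactly when $3\mid a$ and $3\mid b$. From this I would show: if $3\nmid a$ or $3\nmid b$, then some cube has a pure coefficient coprime to $3$, and combined with $\mathbf i^3 = -a\mathbf i$ (and its $\mathbf j,\mathbf k$ analogues) and $1 = 1^3$ this forces $LQ_{a,b}^3$ to be the entire ring; whereas if $3\mid a$ and $3\mid b$, then every pure coefficient of every cube is divisible by $3$, so $LQ_{a,b}^3 = \mathbb Z \oplus 3\mathbb Z\,\mathbf i \oplus 3\mathbb Z\,\mathbf j \oplus 3\mathbb Z\,\mathbf k$. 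This dichotomy is precisely what produces the two cases of the theorem.

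For the lower bounds I would work modulo $9$. The clean fact, valid for all $a,b$, is that the scalar $\sigma$ of any cube avoids the residues $\pm 3 \pmod 9$: if $3\mid \alpha_0$ then $\sigma\equiv 0$, while if $3\nmid\alpha_0$ then $3\alpha_0\equiv\pm 3\pmod 9$ and $\sigma = \alpha_0^3 - 3\alpha_0 N$ runs through $\{1,4,7\}$ or $\{2,5,8\}$ as $N$ varies modulo $3$. When $3\mid a$ and $3\mid b$ we have $N\equiv 0 \pmod 3$, so in fact $\sigma\equiv \alpha_0^3\in\{0,\pm1\}\pmod 9$; a sum of three cubes then has scalar part in $\{-3,\dots,3\}\pmod 9$, which cannot equal the scalar $4\in LQ_{a,b}^3$, giving $g_{a,b}(3)\ge 4$. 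In the remaining case I would rule out representations by two cubes: to reach a suitable scalar target the two pure parts must cancel, which constrains the admissible scalar residues, and a finite check modulo $9$ (splitting on $a,b\bmod 9$) exhibits an element of the ring that is not a sum of two cubes, giving $g_{a,b}(3)\ge 3$.

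The upper bounds are where the real work lies, and I expect them to be the main obstacle. The plan is to assemble a short toolbox of explicit identities: families such as $(t+\mathbf i)^3 \pm (t-\mathbf i)^3$ (and their $\mathbf j,\mathbf k$ and scaled analogues) produce, with two cubes, either a prescribed scalar $2t^3 - 6tN$ or a prescribed pure multiple $2(3t^2 - N)\,u$ in a chosen direction $u$, while single cubes $(\alpha_0 + \alpha_1\mathbf i)^3$ supply the residues that these symmetric combinations miss. Using these I would reduce an arbitrary target to its scalar part plus a controlled pure correction, represent the pure part one direction at a time, and then absorb the leftover integer via the classical identity $(n+1)^3 + (n-1)^3 - 2n^3 = 6n$ together with $1 = 1^3$. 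Optimizing how these pieces overlap---and, crucially, making every identity work uniformly in $a$ and $b$ and for each residue class that arises---is the delicate point; a careful accounting is what should yield the bound $6$ in the generic case and the sharper bound $5$ when $3\mid a$ and $3\mid b$, where the pure coefficients are already forced to be multiples of $3$ and can be produced more efficiently.
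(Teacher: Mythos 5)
Your setup (the identity $\alpha^3 = (\alpha_0^3-3\alpha_0 N) + (3\alpha_0^2-N)v$, the dichotomy for $LQ_{a,b}^3$ according to whether $3\mid a$ and $3\mid b$, and the mod~$9$ argument showing $4$ is not a sum of three cubes in the $3\mid a,\,3\mid b$ case) matches the paper. But the upper bounds --- which you correctly identify as the main obstacle --- are left as a plan rather than a proof, and the plan as described would not reach the claimed bounds. Representing ``the pure part one direction at a time'' and then absorbing the scalar via $(n+1)^3+(n-1)^3-2n^3=6n$ costs roughly two cubes per direction plus more for the scalar, far exceeding $6$. The missing idea is to apply the four-cube identities
\[
6z=(z+1)^3+(z-1)^3+2(-z)^3,\qquad 6z+3=(-z-5)^3+(z+1)^3+(-2z-6)^3+(2z+7)^3,
\]
with $z$ an arbitrary \emph{quaternion}, so that any element whose real part is $\equiv 0$ or $3 \bmod 3$ appropriately and whose imaginary coefficients are all $\equiv 0 \bmod 6$ is a sum of exactly $4$ cubes in one stroke. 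The entire upper-bound problem then reduces to a residue-matching statement: for every $\alpha$ one must find at most two cubes (one cube when $3\mid a$ and $3\mid b$) whose sum agrees with $\alpha$ in real part mod $3$ and in all three imaginary coefficients mod $6$. That statement is where the real case analysis on $a,b \bmod 3$ lives (the paper needs separate constructions depending on whether some coefficient is $2 \bmod 3$, whether exactly one of $a,b$ is divisible by $3$, etc.), and nothing in your toolbox sketch substitutes for it. You also never state the $6z+3$ identity, without which the residue class $3 \bmod 6$ is out of reach.

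There is a second, smaller gap in the lower bound $g_{a,b}(3)\ge 3$ when $3\nmid a$ or $3\nmid b$: you do not exhibit a witness element, and the heuristic you give (``the two pure parts must cancel'') does not describe the actual obstruction --- the paper's witness is $3+3\mathbf i$, whose pure part is nonzero. The argument there is a four-equation congruence analysis: the real coefficient forces $x_0+y_0\equiv 0 \bmod 3$ and then $y_0(P_x-P_y)\equiv 1 \bmod 3$, while the three imaginary coefficients force $P_x\equiv P_y \bmod 3$, a contradiction. A ``finite check mod $9$ splitting on $a,b$'' could in principle be made rigorous, but as written nothing is checked. Similarly, your claim that $3\nmid a$ or $3\nmid b$ forces $LQ_{a,b}^3$ to be the whole ring is asserted from $\mathbf i^3=-a\mathbf i$ and $1=1^3$ without showing how to generate a pure coefficient equal to $1$ while controlling the other coordinates; in the paper this equality is a corollary of the six-cube representation itself.
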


The upper bounds of Theorem \ref{mainthm} are given in Section 2, following an algorithmic approach based on cubic algebraic identities.  The lower bounds are given in Section 3.

It seems quite possible that the lower bounds in Theorem \ref{mainthm} are the actual values for $g_{a,b}(3)$.  A number of individual quaternions were tested in SAGE, and all were found to be expressible as the minimum number of cubes.  Additionally, the identities of Equations (\ref{cube1}) and (\ref{cube2}), while very useful for our upper bound proof, are by no mean optimal.  A search for similar identities involving quaternions was unsuccessful, due to the complications introduced by non-commutativity.

Lastly, it should be noted that Propositions \ref{n3abup} and \ref{3abup} were both initially proven by checking individual residue classes in SAGE.  While we were able to cover all possible cases, more theoretical versions of the proofs are provided here.

\section{$LQ_{a,b}^3$ and Upper Bounds}

Recall that $LQ_{a,b}^3$ is the additive subgroup generated by all cubes in $LQ_{a,b}$.  Our first goal is to determine the shape of elements in $LQ_{a,b}^3$; we therefore first give the general forms of cubes in $LQ_{a,b}$.  If $\alpha = \alpha_0 + \alpha_1{\bf i}+\alpha_2{\bf j}+\alpha_3{\bf k}$, we have
\begin{align}
	\alpha^3 & = \alpha_0^3 - 3a\alpha_0\alpha_1^2 - 3b\alpha_0\alpha_2^2 - 3ab\alpha_0\alpha_3^2 \label{cubeeq}  \\
    & \quad + (3\alpha_0^2\alpha_1 - a\alpha_1^3 - b\alpha_1\alpha_2^2 - ab\alpha_1\alpha_3^2) {\bf i} \notag \\
    & \quad + (3\alpha_0^2\alpha_2 - a\alpha_1^2\alpha_2 - b\alpha_2^3 - ab\alpha_3\alpha_3^2) {\bf j} \notag \\
    & \quad + (3\alpha_0^2\alpha_3 - a\alpha_1^2\alpha_3 - b\alpha_1\alpha_2^2 - ab\alpha_3^3) {\bf k} \notag
\end{align}

We can simplify this equation by noting common factors in each of the coefficients on the right side of Equation (\ref{cubeeq}).  For $\alpha = \alpha_0 + \alpha_1{\bf i} + \alpha_2{\bf j} + \alpha_3{\bf k}$, let 
\begin{equation} \label{Pal}
P_{\alpha} = a\alpha_1^2 + b\alpha_2^2 + ab\alpha_3^2.
\end{equation}
We then have 
\begin{equation} \label{cubeeq2} 
	\alpha^3 = (\alpha_0^2 - 3 P_{\alpha}) \alpha_0  + (3\alpha_0^2 - P_{\alpha} ) \left(\alpha_1 {\bf i} + \alpha_2 {\bf j}  + \alpha_3 {\bf k} \right)
\end{equation}

Additionally, we will make frequent use of the following two identities:

\begin{align}
	6z  &=(z+1)^3+(z-1)^3+(-z)^3+(-z)^3 \label{cube1}\\
    6z+3&=(-z-5)^3+(z+1)^3+(-2z-6)^3+(2z+7)^3 \label{cube2}
\end{align}

These two identities, and these proofs, are inspired by Cohn's results \cite{cohn1, cohn2} on sums of cubes in quadratics fields: $g_{{\mathbb Z}[i]}(3) = 4$ and $g_{{\mathbb Z}[\sqrt{d}]}(3) \leq 5$.

We start by treating the case when $3 \nmid a$ or $3 \nmid b$.

\begin{prop} \label{n3ab3}
If $3 \nmid a$ or $3 \nmid b$, then $LQ_{a,b}^3 = LQ_{a,b}$.
\end{prop}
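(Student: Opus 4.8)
The plan is to exploit that $LQ_{a,b}^3$ is, by definition, an additive subgroup of $LQ_{a,b}$, while $LQ_{a,b}$ is the free abelian group on $1, \mathbf{i}, \mathbf{j}, \mathbf{k}$. Hence it suffices to show that each of the four generators $1, \mathbf{i}, \mathbf{j}, \mathbf{k}$ already lies in $LQ_{a,b}^3$; closure under addition and negation then gives $LQ_{a,b} \subseteq LQ_{a,b}^3$, the reverse inclusion being immediate. First I would dispatch the real part: identity (\ref{cube1}) shows $6z \in LQ_{a,b}^3$ for every $z \in \mathbb{Z}$, and since $1 = 1^3$, the coprimality $\gcd(6,1)=1$ yields $\mathbb{Z} \subseteq LQ_{a,b}^3$. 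This matters not only for the real generator but because it then lets me freely discard the (integer) real part of any cube computed later.

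Next I would assemble a small library of imaginary elements by specializing the cube formula (\ref{cubeeq2}). Taking pure quaternions gives $\mathbf{i}^3 = -a\mathbf{i}$, $\mathbf{j}^3 = -b\mathbf{j}$, $\mathbf{k}^3 = -ab\mathbf{k}$, so $a\mathbf{i}, b\mathbf{j}, ab\mathbf{k} \in LQ_{a,b}^3$. Taking $(1+\mathbf{i})^3 - \mathbf{i}^3$ and discarding its integer real part produces $3\mathbf{i}$, and symmetrically $3\mathbf{j}, 3\mathbf{k} \in LQ_{a,b}^3$. Finally the mixed cubes $(\mathbf{i}+\mathbf{j})^3 = -(a+b)(\mathbf{i}+\mathbf{j})$ and $(\mathbf{j}+\mathbf{k})^3 = -(b+ab)(\mathbf{j}+\mathbf{k})$ supply elements coupling two imaginary directions. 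Each of these is a routine evaluation of (\ref{cubeeq2}).

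With this library in hand the argument becomes arithmetic. By the symmetry exchanging the roles of $(a,\mathbf{i})$ and $(b,\mathbf{j})$ I may assume $3 \nmid b$, so $\gcd(3,b)=1$. From $3\mathbf{j}$ and $b\mathbf{j}$ I get $\mathbf{j} \in LQ_{a,b}^3$ at once. Because $\mathbf{j}$ is now available, I can cancel its multiples out of the mixed cubes: $(\mathbf{i}+\mathbf{j})^3 + (a+b)\mathbf{j}$ isolates $(a+b)\mathbf{i}$, and subtracting $a\mathbf{i}$ leaves $b\mathbf{i}$; together with $3\mathbf{i}$ and $\gcd(3,b)=1$ this gives $\mathbf{i}$. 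Identically, $(\mathbf{j}+\mathbf{k})^3 + (b+ab)\mathbf{j}$ isolates $(b+ab)\mathbf{k}$, subtracting $ab\mathbf{k}$ leaves $b\mathbf{k}$, and with $3\mathbf{k}$ one recovers $\mathbf{k}$. Thus $1,\mathbf{i},\mathbf{j},\mathbf{k} \in LQ_{a,b}^3$ and the proposition follows.

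The step I expect to be the genuine obstacle is producing $\mathbf{i}$ and $\mathbf{k}$ when $3 \mid a$. In that regime the diagonal data $a\mathbf{i}, 3\mathbf{i}$ and $ab\mathbf{k}, 3\mathbf{k}$ all have coefficients divisible by $3$, so no $\mathbb{Z}$-combination of them can reach $\mathbf{i}$ or $\mathbf{k}$. The resolution, and the crux of the proof, is to bring in the off-diagonal cubes $(\mathbf{i}+\mathbf{j})^3$ and $(\mathbf{j}+\mathbf{k})^3$, which inject a factor of $b$ coprime to $3$; that $LQ_{a,b}^3$ is merely an additive group (and not closed under multiplication) is no barrier precisely because the already-secured $\mathbf{j}$-component can be cancelled, reducing each case to a one-dimensional $\gcd$ computation with $3$.
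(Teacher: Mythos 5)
Your proof is correct, and it takes a genuinely different route from the paper's. The paper never proves Proposition \ref{n3ab3} in isolation: it deduces it as a byproduct of the quantitative statement in Proposition \ref{n3abup}, showing via Lemmas \ref{cubeset1}--\ref{sumset2} and the identities (\ref{cube1}), (\ref{cube2}) that \emph{every} element of $LQ_{a,b}$ is a sum of at most $6$ cubes, which requires a case analysis on $a,b \bmod 3$ and careful residue computations mod $6$. You instead argue purely at the level of the additive group: since $LQ_{a,b}^3$ is by definition a subgroup of the free abelian group on $1,\mathbf{i},\mathbf{j},\mathbf{k}$, it suffices to capture the four generators, which you do from the small library $\mathbf{i}^3=-a\mathbf{i}$, $\mathbf{j}^3=-b\mathbf{j}$, $\mathbf{k}^3=-ab\mathbf{k}$, $(1+\mathbf{i})^3-\mathbf{i}^3 = (1-3a)+3\mathbf{i}$ (and its $\mathbf{j},\mathbf{k}$ analogues), $(\mathbf{i}+\mathbf{j})^3=-(a+b)(\mathbf{i}+\mathbf{j})$, and $(\mathbf{j}+\mathbf{k})^3=-(b+ab)(\mathbf{j}+\mathbf{k})$; all of these check out against Equation (\ref{cubeeq2}), the symmetry $LQ_{a,b}\cong LQ_{b,a}$ legitimately reduces you to $3\nmid b$, and the $\gcd(3,b)=1$ steps are sound. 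The trade-off is clear: your argument is shorter, more elementary, and isolates exactly where the hypothesis $3\nmid a$ or $3\nmid b$ is used (the off-diagonal cubes injecting a factor of $b$ coprime to $3$), but it gives no control on the number of cubes used, so it cannot replace the paper's proof of Proposition \ref{n3abup}; the paper's heavier machinery buys both propositions at once.
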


Note that in the Lipschitz quaternions ($a=b=1$), this follows from Theorem 1.1 of \cite{pollack}.

\begin{prop} \label{n3abup}
If $3 \nmid a$ or $3 \nmid b$, then every element of $LQ_{a,b}^3$ can be written as the sum of at most 6 cubes of elements in $LQ_{a,b}$.
\end{prop}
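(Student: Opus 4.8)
The plan is to combine Proposition~\ref{n3ab3} with the two cubic identities, spending four of the six cubes on an identity and two on a preliminary reduction. By Proposition~\ref{n3ab3} the hypothesis $3\nmid a$ or $3\nmid b$ gives $LQ_{a,b}^3 = LQ_{a,b}$, so it suffices to write an arbitrary $\beta = \beta_0 + \beta_1{\bf i} + \beta_2{\bf j} + \beta_3{\bf k}$ as a sum of at most six cubes. The crucial observation is that (\ref{cube1}) is a \emph{formal} polynomial identity in the single variable $z$: its four bases $z+1,\,z-1,\,-z,\,-z$ are polynomials in $z$ and hence commute, so the identity survives verbatim when $z$ is any quaternion. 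Thus $6z$ is a sum of four cubes for every $z\in LQ_{a,b}$ (and, via (\ref{cube2}), so is $6z+3$, should one prefer to leave a residue $3$ in the real coordinate).

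Granting this, I would reduce to the following claim: for every $\beta$ there is a sum $C$ of at most two cubes with $C \equiv \beta \pmod 6$ in each of the four coordinates. Then $\beta - C$ has all coordinates divisible by $6$, so $\beta - C = 6z$ for some $z\in LQ_{a,b}$, and (\ref{cube1}) finishes with four more cubes, for a total of six. To prove the claim I would work modulo $6$ and split, by the Chinese Remainder Theorem, into hitting an arbitrary target in $(\mathbb{Z}/2\mathbb{Z})^4$ and in $(\mathbb{Z}/3\mathbb{Z})^4$ by sums of at most two cubes. These computations glue because each base quaternion may be chosen freely modulo $6$, while a cube's residue mod $2$ (resp. mod $3$) depends only on the base mod $2$ (resp. mod $3$): one solves the two problems separately and realizes both via one base by CRT on its coordinates, padding the easier modulus with a base $\equiv 0$. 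Reducing (\ref{cubeeq2}) modulo $3$ yields $c^3 \equiv c_0 - P_c(c_1{\bf i}+c_2{\bf j}+c_3{\bf k})\pmod 3$ with $P_c = ac_1^2 + bc_2^2 + abc_3^2$, so the real coordinate is the free parameter $c_0$ while the imaginary part is $-P_c$ times the imaginary direction of the base; modulo $2$ one gets an analogous but parameter-dependent formula, which I would enumerate over the parities of $a$ and $b$.

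The heart of the argument, and where the hypothesis is essential, is the mod-$3$ imaginary analysis. Assume $3\nmid a$ (the case $3\nmid b$ follows by the symmetry exchanging $({\bf i},a)$ with $({\bf j},b)$, i.e. the isomorphism $LQ_{a,b}\cong LQ_{b,a}$). A single base $c_1{\bf i}$ realizes any prescribed ${\bf i}$-residue with ${\bf j}$- and ${\bf k}$-coordinates zero, while a base with $c_1\equiv \pm 1$ forces the ${\bf i}$-coordinate to be $\mp a\not\equiv 0$ but lets the ${\bf j}$- and ${\bf k}$-coordinates range freely; hence any imaginary target with nonzero ${\bf i}$-coordinate needs only one cube, and a target with ${\bf i}$-coordinate zero is obtained by pairing a $+a$ contribution with a $-a$ contribution. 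When $3\mid b$ the pure ${\bf j}$- and ${\bf k}$-cubes collapse to zero, which is exactly why reaching those coordinates must be coupled through a nonzero ${\bf i}$-coordinate and why two cubes are sometimes forced; it is also the degeneracy that fails when $3\mid a$ and $3\mid b$, which is what separates this proposition from the other case of Theorem~\ref{mainthm}.

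I expect the main obstacle to be precisely this finite but fiddly verification: confirming that two cubes always suffice, simultaneously across all residue classes in $(\mathbb{Z}/2\mathbb{Z})^4\times(\mathbb{Z}/3\mathbb{Z})^4$ and across all four parities of $(a,b)$. The algebra of any single step is routine, but keeping the case analysis exhaustive without appealing directly to a machine check is delicate — indeed this is the bookkeeping the authors report having first carried out in SAGE before extracting the uniform argument above.
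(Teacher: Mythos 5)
Your overall architecture is the paper's: spend at most two cubes to put $\beta$ into a residue class of the form $6z$ or $6z+3$, then finish with the four-cube identities (\ref{cube1})--(\ref{cube2}) (whose validity for quaternion $z$ you correctly justify via commutativity of ${\mathbb Z}[z]$). The CRT splitting into mod-$2$ and mod-$3$ subproblems is a legitimate reorganization of the paper's direct mod-$6$ bookkeeping, and the gluing step is sound since a cube's residue mod $2$ (resp.\ $3$) depends only on the base mod $2$ (resp.\ $3$). Invoking Proposition \ref{n3ab3} is harmless but backwards relative to the paper, which obtains that proposition as a byproduct of this one.

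The genuine gap is in the mod-$3$ imaginary analysis, which is the entire content of the proposition, and the one case you work out is the easy degenerate one. From (\ref{cubeeq2}), $\text{Im}(c^3)\equiv -P_c\,\text{Im}(c) \bmod 3$ with $P_c=ac_1^2+bc_2^2+abc_3^2$. Your description --- a base with $c_1\equiv\pm1$ has ${\bf i}$-coordinate forced to $\mp a$ while the ${\bf j}$- and ${\bf k}$-coordinates range freely --- is only valid when $3\mid b$, since only then does $P_c$ collapse to $ac_1^2$. When $3\nmid ab$ the scalar $P_c$ depends on all three imaginary coordinates, and in the subcase $a\equiv b\equiv 1\bmod 3$ the natural choice of a base with $c_1,c_2,c_3$ all nonzero gives $P_c\equiv 1+1+1\equiv 0$, so the cube's \emph{entire} imaginary part vanishes; the single-cube imaginary residues are then only $0$ together with $-k\,v$ for vectors $v$ having exactly $k\in\{1,2\}$ nonzero coordinates, and one must separately verify that sums of two such cover $({\mathbb Z}/3{\mathbb Z})^3$. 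This is precisely the obstruction that forces the paper's split into Case 1 versus Case 2 and its introduction of the sets $T_2,T_3$ (requiring one of $\alpha_2,\alpha_3\equiv 0\bmod 3$ exactly so that $P_\alpha\not\equiv 0$); it is a structural idea, not routine bookkeeping, and your sketch does not supply it. The mod-$2$ half of your claim is likewise entirely deferred (it does work: $x^3\equiv (x_0+ax_1+bx_2+abx_3)\,x\bmod 2$, and the standard argument with this linear functional shows two cubes suffice), so as written the proposal establishes the bound only for $3\mid b$, $3\nmid a$ and its mirror image.
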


We will prove that every element of $LQ_{a,b}$ can be written as the sum of at most 6 cubes, which yields both propositions.

\begin{proof}
First, note that by Equations (\ref{cube1}) and (\ref{cube2}), we immediately have that every element in $LQ_{a,b}$ that is a multiple of 6, or 3 more than a multiple of 6, can be written as the sum of 4 cubes.  It then suffices to restrict our attention to the resulting residue classes, and we need only consider the residue of $a,b$ mod 6.  We will break the problem into two cases, and in each case will need two supporting Lemmas.  

Our two cases are as follows:
\begin{itemize}
	\item \underline{Case 1:} Suppose $3 \nmid ab$, and at least one of $a$ or $b$ is congruent to $2 \bmod 3$, and
	\item \underline{Case 2:} All other cases: either $a \equiv b \equiv 1 \bmod 3$, or exactly one of $a$ and $b$ is divisible by 3.
\end{itemize}

For the following Lemmas, we let $\text{Re}(x)$ be the real part of $x$ and $\text{Im}(x)$ be the imaginary or pure part of $x$.  That is, if $x= x_0 + x_1 {\bf i} + x_2 {\bf j} + x_3 {\bf k}$, then $\text{Re}(x) = x_0$ and $\text{Im}(x) = x_1 {\bf i} + x_2 {\bf j} + x_3 {\bf k}$.  Additionally, we write $\text{Im}(x) \equiv \text{Im}(y) \bmod 6$ if 6 divides each of the coefficients of $\text{Im}(x-y)$.  Lastly, for $n \in {\mathbb Z}$, we write $\overline{n}$ for the least non-negative residue of $n \bmod 6$; that is $\overline{n} \equiv n \bmod 6$ and $\overline{n} \in \{0,1, \dots, 5\}$.

\begin{lemma}\label{cubeset1}
Suppose we are in Case 1: $3 \nmid ab$, and at least one of $a$ or $b$ is congruent to $2 \bmod 3$, and let 
\[S = \{ \alpha \in LQ_{a,b} \mid 2 \nmid \alpha_0 \text{ and } 3 \nmid \alpha_1\alpha_2\alpha_3\}.\]
Then, for all $\alpha \in S$, there exists $x \in LQ_{a,b}$ such that $\text{Re}(x^3) \equiv \text{Re}(\alpha) \bmod 3$ and $\text{Im}(x^3) \equiv \text{Im}(\alpha) \bmod 6$.  
\end{lemma}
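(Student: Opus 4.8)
The plan is to work directly from the cube formula (\ref{cubeeq2}). Writing $x = x_0 + x_1{\bf i} + x_2{\bf j} + x_3{\bf k}$ and setting $M = 3x_0^2 - P_x$ (with $P_x$ as in (\ref{Pal})), Equation (\ref{cubeeq2}) gives $\text{Re}(x^3) = x_0^3 - 3x_0 P_x$ and $\text{Im}(x^3) = M\,(x_1 {\bf i} + x_2 {\bf j} + x_3 {\bf k})$. Since $3 \mid 3x_0 P_x$ and $x_0^3 \equiv x_0 \bmod 3$, the real condition $\text{Re}(x^3) \equiv \alpha_0 \bmod 3$ collapses to the single linear congruence $x_0 \equiv \alpha_0 \bmod 3$. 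For the imaginary condition I would aim for the cleanest possible situation: take $x_i \equiv \alpha_i \bmod 6$ for $i=1,2,3$ (for instance $x_i = \alpha_i$), so that the coefficientwise congruence $\text{Im}(x^3) \equiv \text{Im}(\alpha) \bmod 6$ holds as soon as the single scalar multiplier satisfies $M \equiv 1 \bmod 6$.

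Everything thus reduces to forcing $M \equiv 1 \bmod 6$, which I would handle modulo $3$ and modulo $2$ separately via the Chinese Remainder Theorem. Modulo $3$ we have $M \equiv -P_x \equiv -(a x_1^2 + b x_2^2 + ab x_3^2) \bmod 3$, and because $\alpha \in S$ forces $3 \nmid \alpha_i$ for each $i$, the choice $x_i \equiv \alpha_i$ gives $x_i^2 \equiv 1 \bmod 3$, whence $M \equiv -(a + b + ab) \bmod 3$. This is exactly where the Case 1 hypothesis enters: when $3 \nmid ab$ and at least one of $a,b$ is $2 \bmod 3$, checking the three residue pairs $(a,b) \bmod 3 \in \{(1,2),(2,1),(2,2)\}$ shows $a + b + ab \equiv 2 \bmod 3$ in every case, so $M \equiv 1 \bmod 3$ holds automatically, independently of $x_0$.

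Modulo $2$ there is more freedom. From the definitions, $M \equiv x_0 + (a x_1 + b x_2 + ab x_3) \bmod 2$, and since the $x_i$ with $i \geq 1$ are already pinned down mod $2$ by the choice above, I can simply prescribe the parity of $x_0$ so that $M \equiv 1 \bmod 2$. The only compatibility issue is that $x_0$ must simultaneously satisfy $x_0 \equiv \alpha_0 \bmod 3$ and the prescribed residue mod $2$; these combine by the Chinese Remainder Theorem into a single residue for $x_0 \bmod 6$. Assembling $x = x_0 + \alpha_1{\bf i} + \alpha_2{\bf j} + \alpha_3{\bf k}$ with this $x_0$ then yields the desired element.

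The one genuinely substantive step is the modulo-$3$ computation: the entire argument hinges on the identity $a + b + ab \equiv 2 \bmod 3$ holding precisely under the Case 1 hypothesis, which is what makes $M$ a unit mod $3$ regardless of $x_0$. (Notably, the excluded subcase $a \equiv b \equiv 1 \bmod 3$ gives $a + b + ab \equiv 0$, so $M$ would fail to be invertible mod $3$; this is exactly why that configuration is deferred to Case 2.) By contrast, the parity adjustment and the CRT step are routine. I would also remark that the hypothesis $2 \nmid \alpha_0$ does not appear to be needed for this particular claim—the construction above never invokes it—so it is presumably recorded because $S$ is the specific set to which the lemma will later be applied.
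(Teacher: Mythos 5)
Your proof is correct and follows essentially the same route as the paper: fix $x_\ell \equiv \alpha_\ell \bmod 6$ for $\ell=1,2,3$, use the Case~1 hypothesis to get $P_x \equiv a+b+ab \equiv 2 \bmod 3$ so that the multiplier $3x_0^2 - P_x$ is $1 \bmod 3$, and then adjust $x_0$ by a multiple of $3$ to force it to be $1 \bmod 6$ — the paper merely packages your CRT parity choice as the explicit shift $x_0 = \overline{\alpha_0} - 3\delta_\alpha$. Your side remark that $2 \nmid \alpha_0$ is not strictly needed for this conclusion is also accurate; the paper's particular formula for $\delta_\alpha$ relies on it, but your freer choice of the parity of $x_0$ does not.
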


Note that as an immediately corollary of Lemma \ref{cubeset1} and Equations (\ref{cube1}) and (\ref{cube2}), every element of $S$ can be written as the sum of at most 5 cubes.

\begin{proof}
Take $\alpha = \alpha_0 + \alpha_1 {\bf i} + \alpha_2 {\bf j} + \alpha_3 {\bf k} \in S$.  Then let $x =  x_0 + x_1 {\bf i} + x_2 {\bf j} + x_3 {\bf k}$, where $x_{\ell} = \overline{\alpha_{\ell}}$ for $\ell \in \{1,2,3\}$ and $x_0 = \overline{\alpha_0} - 3\delta_{\alpha}$, where
\[\delta_{\alpha} = \begin{cases}
	1, & \text{if } P_{\alpha} \text{ is odd;} \\
	0, & \text{otherwise.}
\end{cases}	\]
By Equation (\ref{cubeeq2}), it suffices to show that $x_0^3 - 3x_0P_x \equiv \alpha_0 \bmod 3$, and $x_{\ell}(3x_0^2 - P_x) \equiv \alpha_{\ell} \bmod 6$ for $\ell \in \{1,2,3\}$.

We then have
\begin{equation} \label{realcube}
	x_0^3 - 3x_0P_x  = (\overline{\alpha_0} - 3\delta_{\alpha})^3 - 3(\overline{\alpha_0} - 3\delta_{\alpha})P_x  \equiv \alpha_0^3 \equiv \alpha_0 \bmod 3,
\end{equation}
so $\text{Re}(x^3) \equiv \text{Re}(\alpha) \bmod 3$.  Then, note that in this case we have $\alpha \in S$, $\alpha_1^2 \equiv \alpha_2^2 \equiv \alpha_3^2 \equiv 1 \bmod 3$, so 
\begin{align*}
	P_{\alpha} & \equiv a\cdot 1 + b \cdot 1 + ab \cdot 1 \bmod 3 \\
	& \equiv (a + 1)(b+1) - 1\bmod 3
\end{align*}
Since at least one of $a$ or $b$ is congruent to $2 \bmod 3$, we must have that $P_{\alpha} \equiv 2 \bmod 3$.  Therefore if $\delta_{\alpha} = 1$, then $P_{\alpha} \equiv 5 \bmod 6$, and if $\delta_{\alpha} = 0$, then $P_{\alpha} \equiv 2 \bmod 6$; in either case, $3\delta_{\alpha} - P_{\alpha} \equiv -2 \bmod 6$.

Then note that since $P_x \equiv P_{\alpha} \bmod 6$ (since by definition $\text{Im}(x) \equiv \text{Im}(\alpha) \bmod 6$) and $\alpha_0$ is odd, we have
\begin{align*}
	3x_0^2 - P_x = 3(\overline{\alpha_0} - 3\delta_{\alpha})^2 - P_x & \equiv 3\alpha_0^2 + 3\delta_\alpha - P_{\alpha} \bmod 6\\
	& \equiv 3 - 2 = 1\bmod 6
\end{align*}
Therefore $x_{\ell}(3x_0^2 - P_x) \equiv \alpha_{\ell} \bmod 6$ for $\ell \in \{1,2,3\}$, so $\text{Im}(x^3) \equiv \text{Im}(\alpha) \bmod 6$, which completes the proof.
\end{proof}

\begin{lemma}\label{sumset1}
Suppose $3 \nmid ab$, and at least one of $a$ or $b$ is congruent to $2 \bmod 3$, and let $S$ be defined as in Lemma \ref{cubeset1}.  Then, for all $\alpha \in LQ_{a,b}$, there exists $\alpha',\alpha'' \in S$ such that $\text{Re}(\alpha' + \alpha'') \equiv \text{Re}(\alpha) \bmod 3$ and $\text{Im}(\alpha' + \alpha'') \equiv \text{Im}(\alpha) \bmod 6$.
\end{lemma}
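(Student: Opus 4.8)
The plan is to build $\alpha'$ and $\alpha''$ one coordinate at a time, exploiting the fact that the two defining conditions of $S$ decouple across the four coordinates: membership in $S$ requires only that the real coordinate be odd (a condition on $\alpha_0$ alone) and that each of $\alpha_1,\alpha_2,\alpha_3$ be nonzero mod $3$ (a separate condition on each imaginary coordinate). The target congruences decouple in the same way: the real parts must sum to $\text{Re}(\alpha)$ mod $3$, and for each $\ell\in\{1,2,3\}$ the $\ell$-th coordinates must sum to $\alpha_{\ell}$ mod $6$. So it suffices to solve four independent one-variable problems, one per coordinate, and then reassemble the pieces into $\alpha'$ and $\alpha''$.

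For the real coordinate I would choose odd integers $\alpha'_0,\alpha''_0$ with $\alpha'_0+\alpha''_0\equiv\alpha_0\bmod 3$. This is possible because the odd residues $\{1,3,5\}$ already represent all three classes mod $3$; fixing $\alpha'_0=1$, I pick $\alpha''_0\in\{1,3,5\}$ lying in the class $\alpha_0-1\bmod 3$. For each imaginary coordinate I need $\alpha'_{\ell},\alpha''_{\ell}$, both coprime to $3$, with $\alpha'_{\ell}+\alpha''_{\ell}\equiv\alpha_{\ell}\bmod 6$. The crucial observation is a covering fact: the set of residues mod $6$ that are not divisible by $3$, namely $\{1,2,4,5\}$, is an additive basis of order $2$ for ${\mathbb Z}/6{\mathbb Z}$, i.e.\ every residue mod $6$ is a sum of two of them. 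I would verify this by a one-line table, all congruences mod $6$: $0\equiv 1+5$, $1\equiv 2+5$, $2\equiv 1+1$, $3\equiv 1+2$, $4\equiv 2+2$, and $5\equiv 1+4$. For any target $\alpha_{\ell}$ this produces the required pair with both entries coprime to $3$.

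Having chosen each coordinate, I set $\alpha'=\alpha'_0+\alpha'_1{\bf i}+\alpha'_2{\bf j}+\alpha'_3{\bf k}$ and define $\alpha''$ analogously. By construction $\alpha'_0,\alpha''_0$ are odd and none of $\alpha'_1,\alpha'_2,\alpha'_3,\alpha''_1,\alpha''_2,\alpha''_3$ is divisible by $3$, so both $\alpha'$ and $\alpha''$ lie in $S$, while $\text{Re}(\alpha'+\alpha'')\equiv\text{Re}(\alpha)\bmod 3$ and $\text{Im}(\alpha'+\alpha'')\equiv\text{Im}(\alpha)\bmod 6$, as required.

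There is no genuinely hard step here; the only point to get right is that the mod-$2$ constraint on the real part and the mod-$3$ nonvanishing constraints on the imaginary parts are independent of the mod-$3$ and mod-$6$ sum conditions, which is exactly the Chinese Remainder structure that permits the coordinates to be treated in isolation. The closest thing to an obstacle is simply confirming the covering table above and checking that every residue mod $6$ can be reached using only summands coprime to $3$; since $\{1,2,4,5\}$ does cover all of ${\mathbb Z}/6{\mathbb Z}$, each case is handled, and the reassembly is immediate.
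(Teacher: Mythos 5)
Your proposal is correct and follows essentially the same route as the paper's proof: both arguments reduce to the observations that the odd residues $\{1,3,5\}$ cover all classes mod $3$ and that $\{1,2,4,5\}+\{1,2,4,5\}={\mathbb Z}/6{\mathbb Z}$, then assemble $\alpha'$ and $\alpha''$ coordinatewise. Your version merely makes the covering table and the coordinate-by-coordinate reassembly explicit.
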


\begin{proof}
Notice that elements of $S$ can have real coefficient equivalent to 1, 3, or $5 \bmod 6$, and can have imaginary coefficients equivalent to 1, 2, 4, or $5 \bmod 6$.  The first conclusion then follows since the real coefficients cover all residue classes$\mod 3$, and the second follows from the fact that in ${\mathbb Z}_6$, $\{1,2,4,5\} + \{1,2,4,5\} = {\mathbb Z}_6$.
\end{proof}

As a consequence of Lemmas \ref{cubeset1} and \ref{sumset1}, for all $\alpha \in LQ_{a,b}$, there exists $x_1, x_2 \in LQ_{a,b}$ such that $\alpha - x_1^3 + x_2^3$ is either a multiple of 6, or 3 more than a multiple of 6; Equations (\ref{cube1}) and (\ref{cube2}) then imply that under the hypotheses of Case 1, every element of $LQ_{a,b}$ can be written as the sum of at most 6 cubes.  We have therefore proven Propositions \ref{n3ab3} and \ref{n3abup} in the case when $3 \nmid ab$, and at least one of $a$ or $b$ is congruent to $2 \bmod 3$.

We then move to Case 2, where we suppose that we are in one of the following cases:
\begin{itemize}
	\item \underline{Case 2a}: $a \equiv b \equiv 1 \bmod 3$.
	\item \underline{Case 2b}: Exactly one of $a$ and $b$ is divisible by 3, and the other is $2 \bmod 3$.  Without loss of generality, in this case we assume $a \equiv 2 \bmod 3$ and $b \equiv 0 \bmod 3$.
	\item \underline{Case 2c}: Exactly one of $a$ and $b$ is divisible by 3, and the other is $1 \bmod 3$.  Without loss of generality, in this case we assume $a \equiv 1 \bmod 3$ and $b \equiv 0 \bmod 3$.
\end{itemize}	
	
\begin{lemma}\label{cubeset2}
Given $a$ and $b$ satisfying one of the cases above, let 
\[T_2 = \{ \alpha \in LQ_{a,b} \mid 2 \nmid \alpha_0 \text{ and } 3 \nmid \alpha_1\alpha_3 \text{ and } 3 \mid \alpha_2\},\]
\[T_3 = \{ \alpha \in LQ_{a,b} \mid 2 \nmid \alpha_0 \text{ and } 3 \nmid \alpha_1\alpha_2 \text{ and } 3 \mid \alpha_3\},\]
and $T = T_2 \cup T_3$.  Then, for all $\alpha \in T$, there exists $x \in LQ_{a,b}$ such that $\text{Re}(x^3) \equiv \text{Re}(\alpha) \bmod 3$ and $\text{Im}(x^3) \equiv \text{Im}(\alpha) \bmod 6$.  
\end{lemma}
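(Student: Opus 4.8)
The plan is to mirror the explicit, coefficient-by-coefficient construction from the proof of Lemma \ref{cubeset1}, adapting it to the modified sets $T_2$ and $T_3$ and, crucially, to the three subcases. Given $\alpha \in T$, I would again set $x_0 = \overline{\alpha_0} - 3\delta_{\alpha}$ with the same parity switch $\delta_{\alpha}$ ($1$ if $P_{\alpha}$ is odd, $0$ otherwise), and take the imaginary coefficients $x_{\ell}$ to agree with $\pm \alpha_{\ell} \bmod 6$, the sign to be fixed by the case. By Equation (\ref{cubeeq2}) it then suffices to control the scalar $x_0^3 - 3x_0 P_x$ modulo $3$ and the single common multiplier $3x_0^2 - P_x$ modulo $6$; note that $P_x \equiv P_{\alpha} \bmod 6$ is preserved under any sign flip of an imaginary coefficient, since that leaves each $x_{\ell}^2$, hence $P_x$, unchanged mod $6$.

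The real part is identical to Lemma \ref{cubeset1}: since $2 \nmid \alpha_0$ holds for every element of $T_2$ and $T_3$, Fermat gives $x_0^3 - 3x_0 P_x \equiv x_0^3 \equiv \alpha_0^3 \equiv \alpha_0 \bmod 3$. The content lies in the imaginary multiplier. As before, using that $\alpha_0$ is odd (so $3\alpha_0^2 \equiv 3 \bmod 6$), one computes $3x_0^2 - P_x \equiv 3 + 3\delta_{\alpha} - P_{\alpha} \bmod 6$, and the switch $\delta_{\alpha}$ collapses $3\delta_{\alpha} - P_{\alpha}$ to a value mod $6$ depending only on $P_{\alpha} \bmod 3$: it is $\equiv -2$ when $P_{\alpha} \equiv 2 \bmod 3$ and $\equiv +2$ when $P_{\alpha} \equiv 1 \bmod 3$. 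Hence the multiplier is $\equiv 1 \bmod 6$ in the first situation and $\equiv 5 \equiv -1 \bmod 6$ in the second.

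Everything therefore reduces to computing $P_{\alpha} \bmod 3$ on each of $T_2$ and $T_3$. On $T_2$ we have $3 \mid \alpha_2$ and $3 \nmid \alpha_1\alpha_3$, so $\alpha_1^2 \equiv \alpha_3^2 \equiv 1$ and $\alpha_2^2 \equiv 0 \bmod 3$, giving $P_{\alpha} \equiv a + ab = a(1+b) \bmod 3$; symmetrically on $T_3$ one gets $P_{\alpha} \equiv a + b \bmod 3$. Substituting the residues: in Cases 2a ($a \equiv b \equiv 1$) and 2b ($a \equiv 2$, $b \equiv 0$) both expressions are $\equiv 2 \bmod 3$, so the multiplier is $\equiv 1$, and the choice $x_{\ell} = \overline{\alpha_{\ell}}$ yields $x_{\ell}(3x_0^2 - P_x) \equiv \alpha_{\ell} \bmod 6$, concluding exactly as in Lemma \ref{cubeset1}. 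In Case 2c ($a \equiv 1$, $b \equiv 0$) both expressions are instead $\equiv 1 \bmod 3$, so the multiplier is $\equiv -1 \bmod 6$; here I would take $x_{\ell} = \overline{-\alpha_{\ell}}$ for $\ell \in \{1,2,3\}$, so that $x_{\ell}(3x_0^2 - P_x) \equiv (-\alpha_{\ell})(-1) \equiv \alpha_{\ell} \bmod 6$.

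The main obstacle, and the only genuinely new feature relative to Lemma \ref{cubeset1}, is Case 2c: the altered divisibility pattern forces $P_{\alpha} \equiv 1$ rather than $2 \bmod 3$, which flips the imaginary multiplier from $1$ to $-1 \bmod 6$, so that the naive reuse of $x_{\ell} = \overline{\alpha_{\ell}}$ would produce $-\alpha_{\ell}$ and fail. The fix is the uniform sign flip above, which is legitimate precisely because the multiplier $3x_0^2 - P_x$ is a single scalar independent of $\ell$ and because negating the imaginary part disturbs neither $P_x \bmod 6$ nor the real-part congruence. The remainder is the routine bookkeeping of verifying that $P_{\alpha} \bmod 3$ takes the claimed value in each case and on each of $T_2$, $T_3$.
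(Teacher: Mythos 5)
Your proposal is correct and follows essentially the same route as the paper's proof: the same choice of $x_0 = \overline{\alpha_0} - 3\delta_{\alpha}$, the same case-by-case computation of $P_{\alpha} \bmod 3$ on $T_2$ and $T_3$, and the same sign flip on the imaginary coefficients in Case 2c (your $x_{\ell} = \overline{-\alpha_{\ell}}$ agrees mod $6$ with the paper's $6 - \overline{\alpha_{\ell}}$) to compensate for the multiplier being $-1$ rather than $1 \bmod 6$.
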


\begin{proof} The proofs in each subcase are very similar to that of Lemma \ref{cubeset1}; we will only highlight where the definitions and calculations differ.

Take $\alpha = \alpha_0 + \alpha_1 {\bf i} + \alpha_2 {\bf j} + \alpha_3 {\bf k} \in S$, let $x_0 = \overline{\alpha_0} - 3\delta_{\alpha}$ as defined in Lemma \ref{cubeset1}, and let 
\[x_{\ell} = \begin{cases}
	\overline{\alpha_{\ell}}, & \text{in Cases 2a and 2b; }\\
	6 - \overline{\alpha_{\ell}} ,& \text{in Case 2c}.
\end{cases}	\]
Immediately by Equation (\ref{realcube}) in Lemma \ref{cubeset1}, we have that $\text{Re}(x^3) \equiv \text{Re}(\alpha) = \alpha_0 \bmod 3$.

Then, for $\alpha \in T_2$, we have $\alpha_1^2 \equiv \alpha_3^2 \equiv 1 \bmod 3$ and $\alpha_2^2 \equiv 0 \bmod 3$, so from Equation (\ref{Pal}):
\[
	P_{\alpha} \equiv 
	\begin{cases}
		 2 \quad \equiv 1\cdot 1 + 1 \cdot 0 + 1 \cdot 1\bmod 3, & \text{in Case 2a;} \\
		 2 \quad \equiv  2\cdot 1 + 0 \cdot 0 + 0 \cdot 1  \bmod 3, & \text{in Case 2b;} \\
		 1 \quad \equiv 1\cdot 1 + 0 \cdot 0 + 0 \cdot 1  \bmod 3, & \text{in Case 2c}. 
	\end{cases}	 
\]
Note that in all of these Cases, $b \equiv ab \bmod 3$, so for $\alpha \in T_3$, the values of $P_{\alpha}$ mod 3 are the same as for $\alpha \in T_2$.

Therefore, in Cases 2a and 2b, if $\delta_{\alpha} = 1$, then $P_{\alpha} \equiv 5 \bmod 6$, and if $\delta_{\alpha} = 0$, then $P_{\alpha} \equiv 2 \bmod 6$; either way, $3\delta_{\alpha} - P_{\alpha} \equiv -2 \bmod 6$.  Since $P_x \equiv P_{\alpha} \bmod 6$ and $\alpha_0$ is odd, we have
\begin{align*}
	3x_0^2 - P_x = 3(\overline{\alpha_0} - 3\delta_{\alpha})^2 - P_x & \equiv 3\alpha_0^2 + 3\delta_\alpha - P_{\alpha} \bmod 6\\
	& \equiv 3 - 2 = 1\bmod 6
\end{align*}
Therefore $\text{Im}(x^3) \equiv \text{Im}(\alpha) \bmod 6$, which completes the proof for Cases 2a and 2b.

In Case 2c, if $\delta_{\alpha} = 1$, then $P_{\alpha} \equiv 1 \bmod 6$, and if $\delta_{\alpha} = 0$, then $P_{\alpha} \equiv 4 \bmod 6$; either way, $3\delta_{\alpha} - P_{\alpha} \equiv 2 \bmod 6$.  The same calculation as above then yields
\[ 3x_0^2 - P_x \equiv 3 + 2 \equiv -1\bmod 6\]
But, as we have defined $x_{\ell} = 6 - \overline{\alpha_{\ell}}$ in this case, we have
\[x_{\ell} (3x_0^2 - P_x) \equiv (6 - \overline{\alpha_{\ell}})(-1) \equiv \alpha_{\ell} \bmod 6\]
for $\ell \in \{1, 2, 3\}$, which implies $\text{Im}(x^3) \equiv \text{Im}(\alpha) \bmod 6$, completing the proof for Case 2c.
\end{proof}

\begin{lemma}\label{sumset2}
Given $a$ and $b$ satisfying Case 2, let $T$ be defined as in Lemma \ref{cubeset2}. Then, for all $\alpha \in LQ_{a,b}$, there exists $\alpha',\alpha'' \in T$ such that $\text{Re}(\alpha' + \alpha'') \equiv \text{Re}(\alpha) \bmod 3$ and $\text{Im}(\alpha' + \alpha'') \equiv \text{Im}(\alpha) \bmod 6$.
\end{lemma}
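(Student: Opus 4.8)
The plan is to follow the structure of the proof of Lemma \ref{sumset1}, treating the real and imaginary coordinates separately, but with extra care because $T$ no longer permits all three imaginary coefficients to be simultaneously coprime to 3. First I would record which residues are available from $T_2$ and $T_3$. In both sets the real coefficient $\alpha_0$ is odd and otherwise unconstrained, so it takes every residue modulo 3; moreover the conditions defining $T_2$ and $T_3$ impose no coupling between $\alpha_0$ and the imaginary coefficients. Thus, exactly as in Lemma \ref{sumset1}, the real conclusion $\text{Re}(\alpha' + \alpha'') \equiv \text{Re}(\alpha) \bmod 3$ can always be arranged, and it can be arranged independently of whatever we do with the imaginary parts. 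This reduces the problem to matching the three imaginary coefficients modulo 6.

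For the imaginary part I would first catalogue the three sumset identities in ${\mathbb Z}_6$ that drive the argument. A coefficient forced to be coprime to 3 ranges over $\{1,2,4,5\}$, and $\{1,2,4,5\} + \{1,2,4,5\} = {\mathbb Z}_6$. A coefficient forced to be divisible by 3 ranges over $\{0,3\}$, and here $\{0,3\} + \{0,3\} = \{0,3\}$ while $\{0,3\} + \{1,2,4,5\} = \{1,2,4,5\}$. These three facts are what I would invoke in each case.

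The main obstacle, and the reason the Case 1 argument does not transfer verbatim, is that neither $T_2$ nor $T_3$ alone gives independent control of all three imaginary coefficients: an element of $T_2$ pins its ${\bf j}$-coefficient into $\{0,3\}$, and an element of $T_3$ pins its ${\bf k}$-coefficient into $\{0,3\}$. To get around this I would split on the divisibility of the target coefficients $\alpha_2$ and $\alpha_3$ by 3. If $3 \mid \alpha_2$, choose both $\alpha', \alpha'' \in T_2$; then the ${\bf i}$- and ${\bf k}$-sums range over all of ${\mathbb Z}_6$ by the first identity, while the ${\bf j}$-sum lies in $\{0,3\}$, which already contains $\alpha_2 \bmod 6$. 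If instead $3 \mid \alpha_3$, take both $\alpha', \alpha'' \in T_3$ by the symmetric argument. In the remaining case, where $3 \nmid \alpha_2$ and $3 \nmid \alpha_3$, take $\alpha' \in T_2$ and $\alpha'' \in T_3$: the ${\bf i}$-sum still covers ${\mathbb Z}_6$, and each of the ${\bf j}$- and ${\bf k}$-sums is of the form $\{0,3\} + \{1,2,4,5\} = \{1,2,4,5\}$, which contains $\alpha_2 \bmod 6$ and $\alpha_3 \bmod 6$ respectively.

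Finally I would verify exhaustiveness: if neither $3 \mid \alpha_2$ nor $3 \mid \alpha_3$, then both residues lie in $\{1,2,4,5\}$ and the mixed case applies; otherwise one of the first two cases applies. In every case the selected $\alpha', \alpha''$ lie in $T = T_2 \cup T_3$ and realize the required imaginary residues modulo 6, while the real parts are adjusted independently to match modulo 3, which completes the proof.
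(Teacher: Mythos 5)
Your proposal is correct and follows exactly the same case split as the paper's proof (both in $T_2$ when $3\mid\alpha_2$, both in $T_3$ when $3\mid\alpha_3$, one of each when $3\nmid\alpha_2\alpha_3$); you simply spell out the sumset computations $\{1,2,4,5\}+\{1,2,4,5\}={\mathbb Z}_6$, $\{0,3\}+\{0,3\}=\{0,3\}$, and $\{0,3\}+\{1,2,4,5\}=\{1,2,4,5\}$ that the paper leaves implicit by citing Lemma \ref{sumset1}. No issues.
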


\begin{proof} In light of Lemma \ref{sumset1}, if $3 \mid \alpha_2$ or $3 \mid \alpha_3$, we can choose $\alpha'$ and $\alpha''$ both to be in $T_2$ or $T_3$, respectively.  If $3 \nmid \alpha_2\alpha_3$, then there exists $\alpha' \in T_2$ and $\alpha'' \in T_3$ satisfying the conclusions.
\end{proof}

This completes the proofs of Propositions \ref{n3ab3} and \ref{n3abup}: as in Case 1, Lemmas \ref{cubeset2} and \ref{sumset2} imply that in Case 2, every element of $LQ_{a,b}$ can be written as the sum of at most 6 cubes.
\end{proof}

If $3 \mid a$ and $3 \mid b$, there is slightly more work to do, as not all elements of the ring can be written as the sum of cubes.

\begin{prop}
If $3 \mid a$ and $3 \mid b$, then 
\[LQ_{a,b}^3=\{ \alpha_0 + 3\alpha_1 {\bf i} + 3\alpha_2 {\bf j} + 3\alpha_3 {\bf k} \mid {\bf i}^2 = -a, {\bf j}^2 = -b, 
={\bf i}{\bf j} = -{\bf j} {\bf i} = {\bf k}, \alpha_n \in {\mathbb Z} \}.\]
\end{prop}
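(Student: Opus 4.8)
The plan is to prove the two inclusions separately, writing $G$ for the set on the right-hand side, which is visibly an additive subgroup of $LQ_{a,b}$.

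For the inclusion $LQ_{a,b}^3 \subseteq G$, I would work directly from Equation (\ref{cubeeq2}). Since $3 \mid a$ and $3 \mid b$, we also have $3 \mid ab$, so every term of $P_{\alpha} = a\alpha_1^2 + b\alpha_2^2 + ab\alpha_3^2$ is divisible by 3; hence $3\alpha_0^2 - P_{\alpha} \equiv 0 \bmod 3$. By (\ref{cubeeq2}) each imaginary coefficient of $\alpha^3$ is $\alpha_{\ell}(3\alpha_0^2 - P_{\alpha})$ for $\ell \in \{1,2,3\}$, and is therefore divisible by 3. Thus every cube lies in $G$; since $G$ is closed under addition and negation, the additive subgroup generated by all cubes is contained in $G$.

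For the reverse inclusion $G \subseteq LQ_{a,b}^3$, it suffices to exhibit the generators $1$, $3{\bf i}$, $3{\bf j}$, $3{\bf k}$ of $G$ as integer combinations of cubes. The real generator is immediate: $1 = 1^3 \in LQ_{a,b}^3$, and since the real integers are closed under the cube map, their additive span provides all of ${\mathbb Z}$ inside $LQ_{a,b}^3$. To produce the imaginary generators, the key idea is that differencing two cubes isolates the $3\alpha_0^2$ contribution to the imaginary part. Using (\ref{cubeeq2}), I would compute
\[
(1+{\bf i})^3 - {\bf i}^3 = (1-3a) + 3{\bf i},
\]
and likewise $(1+{\bf j})^3 - {\bf j}^3 = (1-3b) + 3{\bf j}$ and $(1+{\bf k})^3 - {\bf k}^3 = (1-3ab) + 3{\bf k}$. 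In each case the imaginary part is exactly the desired generator, and the leftover real part is an integer that can be cancelled using real cubes, which already lie in $LQ_{a,b}^3$. This places $3{\bf i}$, $3{\bf j}$, and $3{\bf k}$ in $LQ_{a,b}^3$, completing the inclusion and hence the equality.

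The step I expect to be the crux is the reverse inclusion, and specifically the choice of cubes producing the imaginary generators. The naive cubes ${\bf i}^3 = -a{\bf i}$, ${\bf j}^3 = -b{\bf j}$, ${\bf k}^3 = -ab{\bf k}$ only yield multiples of $3{\bf i}$, $3{\bf j}$, $3{\bf k}$ scaled by the possibly large integers $a/3$, $b/3$, $ab/3$, which need not generate the full subgroup; one must instead find combinations whose imaginary part is exactly $3$ times a basis vector. The difference-of-cubes trick above succeeds precisely because the $3\alpha_0^2$ term contributes a factor of 3 that is independent of $a$ and $b$, exactly matching the factor of 3 forced in the first inclusion.
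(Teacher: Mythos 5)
Your proof is correct, but its second half takes a genuinely different route from the paper. The forward inclusion is the same in both: since $3 \mid a$ and $3 \mid b$, Equation (\ref{Pal}) gives $3 \mid P_{\alpha}$, so by Equation (\ref{cubeeq2}) every imaginary coefficient of a cube is divisible by $3$, and this property passes to the additive group generated by cubes. For the reverse inclusion the paper does not argue directly at all: it appeals to Proposition \ref{3abup}, whose proof constructs, for each $\alpha$ of the stated form, a single cube $x^3$ matching $\alpha$ modulo $3$ in the real part and modulo $6$ in the imaginary part, and then finishes with the identities (\ref{cube1}) and (\ref{cube2}); that yields the stronger statement that every such element is a sum of at most $5$ cubes. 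Your generator argument --- verifying $(1+{\bf i})^3-{\bf i}^3=(1-3a)+3{\bf i}$ and its analogues (the computations check out against Equation (\ref{cubeeq2})), then cancelling the leftover integer with copies of $(\pm 1)^3$ --- is a correct and more elementary way to obtain the set equality, and it has the virtue of being self-contained, with no forward reference to the upper-bound machinery. What it gives up is any control on the number of cubes used: cancelling $1-3a$ costs on the order of $|1-3a|$ cubes of $\pm 1$, so your argument establishes $G \subseteq LQ_{a,b}^3$ but not the bound of $5$ cubes that the paper needs for Theorem \ref{mainthm}. For the proposition as stated, your route is perfectly adequate.
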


\begin{proof}
Note that if $3 \mid a$ and $3 \mid b$, then for all $\alpha \in LQ_{a,b}$, we have $3 \mid P_{\alpha}$ from Equation \ref{Pal}.  Then by Equation \ref{cubeeq2}, we have that each of the imaginary coefficients (the coefficients of ${\bf i}, {\bf j}, {\bf k}$) are each divisible by 3, showing that the form above is necessary for all elements of $LQ_{a,b}^3$.

The sufficiency of the above form is then the result of the proof of Proposition \ref{3abup}, which shows that every element of this form can be written as the sum of at most 5 cubes.
\end{proof}

\begin{prop} \label{3abup}
If $3 \mid a$ and $3 \mid b$, then every element of $LQ_{a,b}^3$ can be written as the sum of at most 5 cubes of elements in $LQ_{a,b}$.
\end{prop}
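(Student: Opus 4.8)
The plan is to follow the same two-step strategy used in Case 1, but to exploit the extra divisibility afforded by $3\mid a$ and $3\mid b$ in order to get away with a single correcting cube rather than two, which is exactly what drops the bound from $6$ to $5$. Recall that every element of $LQ_{a,b}^3$ has the form $\gamma=\gamma_0+3\gamma_1{\bf i}+3\gamma_2{\bf j}+3\gamma_3{\bf k}$ with $\gamma_n\in{\mathbb Z}$. I would aim to produce, for each such $\gamma$, a single $x\in LQ_{a,b}$ with $\text{Re}(x^3)\equiv\gamma_0\bmod 3$ and $\text{Im}(x^3)\equiv\text{Im}(\gamma)\bmod 6$. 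Granting this, $\gamma-x^3$ has real part divisible by $3$ and all imaginary coefficients divisible by $6$, so it equals $6Z$ or $6Z+3$ for some $Z\in LQ_{a,b}$; since (\ref{cube1}) and (\ref{cube2}) are polynomial identities in their argument with integer (hence central) coefficients, they remain valid for the quaternion $Z$ and express $\gamma-x^3$ as $4$ cubes. Together with $x^3$ this yields the desired $5$ cubes.

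The heart of the matter is the congruence bookkeeping, and this is where the improvement appears. Writing $a=3a'$ and $b=3b'$, we have $3\mid P_x$ for every $x$, so by (\ref{cubeeq2}) the real part of any cube satisfies $\text{Re}(x^3)\equiv x_0\bmod 3$, while each imaginary coefficient of $x^3$ is divisible by $3$: explicitly the ${\bf i}$-coefficient is $3(x_0^2-Q)x_1$, where $Q=P_x/3$, and similarly for ${\bf j}$ and ${\bf k}$. Because the corresponding coefficient of $\gamma$ is $3\gamma_1$, matching $\text{Im}(x^3)$ to $\text{Im}(\gamma)$ modulo $6$ is equivalent to the three conditions $(x_0^2-Q)x_\ell\equiv\gamma_\ell\bmod 2$ for $\ell\in\{1,2,3\}$. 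This reduction of the imaginary matching from modulus $6$ to modulus $2$ is precisely what makes one cube sufficient.

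To build $x$, I would first set $x_\ell\equiv\gamma_\ell\bmod 2$ for $\ell\in\{1,2,3\}$. Reducing modulo $2$ gives $x_0^2-Q\equiv x_0+a'x_1+b'x_2+a'b'x_3\bmod 2$, a quantity whose parity can be toggled by the choice of $x_0$; hence I can choose, by the Chinese Remainder Theorem, an $x_0$ with $x_0\equiv\gamma_0\bmod 3$ and $x_0^2-Q\equiv 1\bmod 2$ simultaneously. With $x_0^2-Q$ odd we get $(x_0^2-Q)x_\ell\equiv x_\ell\equiv\gamma_\ell\bmod 2$, so all three imaginary congruences hold, while $x_0\equiv\gamma_0\bmod 3$ gives the real one. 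Thus $x$ matches $\gamma$ in the required sense and the argument above closes.

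The work here is routine once the right reduction is in hand, so I expect the main obstacle to be conceptual rather than computational: recognizing that the hypothesis $3\mid a$, $3\mid b$ forces every cube to have imaginary part divisible by $3$, which collapses the imaginary matching to a single parity condition per coordinate and removes the need for the auxiliary ``sum'' lemma (Lemma \ref{sumset1}/\ref{sumset2}) used in the $6$-cube cases. A secondary point to state carefully is the legitimacy of applying (\ref{cube1}) and (\ref{cube2}) to a quaternion argument, which holds because each is an identity of integer polynomials evaluated at a single variable that commutes with all the integer coefficients appearing.
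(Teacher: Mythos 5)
Your proposal is correct and follows essentially the same strategy as the paper: a single correcting cube $x^3$ chosen to match the target in real part mod $3$ and imaginary part mod $6$, followed by the four-cube identities (\ref{cube1}) and (\ref{cube2}). Your reduction of the imaginary matching to a parity condition and the CRT choice of $x_0$ is exactly the paper's $\delta_{\alpha}$ adjustment in slightly different clothing.
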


\begin{proof}
In light of Equations \ref{cube1} and \ref{cube2}, it suffices to show that for all elements $\alpha \in LQ_{a,b}^3$, there exists $x \in LQ_{a,b}$ such that $\text{Re}(x^3) \equiv \text{Re}(\alpha) \bmod 3$ and $\text{Im}(x^3) \equiv \text{Im}(\alpha) \bmod 6$.  

Take $\alpha = \alpha_0 + \alpha_1 {\bf i} + \alpha_2 {\bf j} + \alpha_3 {\bf k} \in LQ_{a,b}^3$.  Then let $x_{\ell} = \overline{\alpha_{\ell}}$ for $\ell \in \{1,2,3\}$ and $x_0 = \overline{\alpha_0} - 3\delta_{\alpha}$, where
\[\delta_{\alpha} = \begin{cases}
	1, & \text{if } P_{\alpha} \equiv \alpha_0 \bmod 2; \\
	0, & \text{otherwise.}
\end{cases}	\]
We immediately get $\text{Re}(x^3) \equiv \text{Re}(\alpha) \bmod 3$ by the calculations in Lemma \ref{cubeset1}.

For $\alpha \in LQ_{a,b}^3$, since $3 \mid a$ and $3 \mid b$, we have $P_{\alpha} \equiv 0 \bmod 3$.  Therefore if $\delta_{\alpha} = 1$, then $\alpha_0$ is odd and $P_{\alpha} \equiv 3 \bmod 6$, or $\alpha_0$ is even and $P_{\alpha} \equiv 0 \bmod 6$.  If $\delta_{\alpha} = 0$, then $\alpha_0$ is odd and $P_{\alpha} \equiv 0 \bmod 6$, or $\alpha_0$ is even and $P_{\alpha} \equiv 3 \bmod 6$.  Specifically, an {\em odd} number of $\alpha_0$, $\delta_{\alpha}$, and $P_{\alpha}$ will be odd.  We then have
\begin{align*}
	3x_0^2 - P_x = 3(\overline{\alpha_0} - 3\delta_{\alpha})^2 - P_x & \equiv 3\alpha_0^2 + 3\delta_\alpha - P_{\alpha} \bmod 6 \\
	& \equiv 3 \bmod 6
\end{align*}

Then, since $\alpha \in LQ_{a,b}^3$, $\alpha_{\ell}$ is a multiple of 3 for $\ell \in \{1,2,3\}$, so $3\alpha_{\ell} \equiv \alpha_{\ell} \bmod 6$.  But these are now exactly the mod 6 imaginary coefficients of $x^3$.

Therefore $\text{Im}(x^3) \equiv \text{Im}(\alpha) \bmod 6$, which completes the proof.
\end{proof}

\section{Lower Bounds}

We now prove the lower bounds of Theorem \ref{mainthm} via example.  

\begin{prop} \label{n3lb}
If $3 \nmid a$ or $3 \nmid b$, then $3+3{\bf i}$ cannot be written as the sum of 2 cubes in $LQ_{a,b}$.
\end{prop}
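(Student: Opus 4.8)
The plan is to assume for contradiction that $3 + 3\mathbf{i} = x^3 + y^3$ for some $x,y \in LQ_{a,b}$ and to derive a contradiction by reducing the four coordinate equations modulo powers of $3$. Writing $x = x_0 + x_1\mathbf{i} + x_2\mathbf{j} + x_3\mathbf{k}$ and similarly for $y$, Equation (\ref{cubeeq2}) gives $\mathrm{Re}(x^3) = x_0^3 - 3x_0 P_x$ and $\mathrm{Im}(x^3) = (3x_0^2 - P_x)(x_1\mathbf{i} + x_2\mathbf{j} + x_3\mathbf{k})$, with $P_x$ as in (\ref{Pal}). The one arithmetic input I would record at the outset is that $t^3 \equiv 0, 1, -1 \pmod{9}$ according as $t \equiv 0, 1, 2 \pmod{3}$; since the term $-3x_0 P_x$ is a multiple of $3$, this pins down $\mathrm{Re}(x^3) \bmod 9$ in terms of the residues of $x_0$ and $P_x$ modulo $3$ alone. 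Note that, in contrast with the integer case, $\mathrm{Re}(x^3) \bmod 9$ can take values outside $\{0,\pm1\}$, so the real part alone will not suffice and the imaginary coordinates must be used.

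First I would analyze the real part. Reducing $\mathrm{Re}(x^3) + \mathrm{Re}(y^3) = 3$ modulo $3$ gives $x_0^3 + y_0^3 \equiv 0$, hence $x_0 + y_0 \equiv 0 \pmod{3}$. If $x_0 \equiv y_0 \equiv 0 \pmod{3}$, then each real part is $\equiv 0 \pmod{9}$, so their sum is $\equiv 0 \not\equiv 3 \pmod{9}$, a contradiction. Thus, after possibly swapping $x$ and $y$, I may assume $x_0 \equiv 1$ and $y_0 \equiv 2 \pmod{3}$. A direct computation of the real parts modulo $9$ in this case yields $\mathrm{Re}(x^3) \equiv 1 - 3v_x$ and $\mathrm{Re}(y^3) \equiv -1 + 3v_y \pmod{9}$, where $v_x, v_y$ denote the residues of $P_x, P_y$ modulo $3$; their sum being $3$ forces $v_y - v_x \equiv 1 \pmod{3}$, and in particular $v_x \neq v_y$.

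Next I would turn to the imaginary coordinates. Reducing the three imaginary equations modulo $3$ (where the $3x_0^2$ and $3y_0^2$ terms vanish and the target $(3,0,0)$ becomes $(0,0,0)$) gives $v_x x_\ell + v_y y_\ell \equiv 0 \pmod{3}$ for each $\ell \in \{1,2,3\}$. I would then finish by a short case analysis on $(v_x,v_y)$, which satisfies $v_y \equiv v_x + 1 \pmod{3}$: if $v_x = 0$ (so $v_y \neq 0$) the relation forces $3 \mid y_1, y_2, y_3$, whence $P_y \equiv 0 \pmod{3}$, contradicting $v_y \neq 0$; the case $v_y = 0$ is symmetric; and if $v_x, v_y$ are both nonzero (forcing $v_x = 1$, $v_y = 2$), the relation forces $x_\ell \equiv y_\ell \pmod{3}$ for each $\ell$, hence $x_\ell^2 \equiv y_\ell^2 \pmod{3}$, and therefore $P_x \equiv P_y \pmod{3}$ by (\ref{Pal}), contradicting $v_x \neq v_y$.

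The main obstacle is the middle, both-nonzero case: there the real-part and imaginary-part constraints are each individually satisfiable, and the contradiction emerges only from the observation that $P_x$ is determined modulo $3$ by the imaginary coordinates of $x$ modulo $3$, so matching imaginary parts forces matching values of $P \bmod 3$, directly colliding with the condition $v_y - v_x \equiv 1$ extracted from the real part. Notably, this argument does not use the hypothesis $3 \nmid a$ or $3 \nmid b$: it shows that $3 + 3\mathbf{i}$ is never a sum of two cubes, and the hypothesis serves only to place this element in the regime where the resulting bound $g_{a,b}(3) \geq 3$ is the one sought.
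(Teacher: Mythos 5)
Your proposal is correct and follows essentially the same route as the paper: reduce the real coordinate mod $9$ to force $P_y - P_x \equiv 1 \pmod 3$ (the paper's Equation (\ref{n31}) in the form $y_0(P_x - P_y) \equiv 1$), reduce the imaginary coordinates mod $3$ to get $P_x x_\ell + P_y y_\ell \equiv 0$, and then rule out each possibility for $(P_x, P_y) \bmod 3$ by the same two contradictions (one residue zero forces the other $P$ to vanish; both nonzero forces $x_\ell^2 \equiv y_\ell^2$ and hence $P_x \equiv P_y$). Your closing observation that the hypothesis $3 \nmid a$ or $3 \nmid b$ is not actually used in the argument is accurate and consistent with the paper's proof as well.
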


\begin{proof}
Suppose $x, y \in LQ_{a,b}$ are such that 
\begin{equation}
    3+ 3{\bf i} = x^3 + y^3, \label{not3eq}
\end{equation}
and write $x = x_0 + x_1{\bf i} + x_2{\bf j} + x_3{\bf k}$, $y = y_0 + y_1{\bf i} + y_2{\bf j} + y_3{\bf k}$ with $x_n, y_n \in {\mathbb Z}$.  We then have the following four equations from the coefficients of Equation (\ref{not3eq}):
\begin{align}
    x_0^3-3x_0P_x+y_0^3-3y_0P_y &=3 &(\text{real coefficient}) \label{re1} \\
    3x_0^2x_1-x_1P_x+3y_0^2y_1-y_1P_y &=3 & ({\bf i}  \text{ coefficient}) \label{ico1}\\
    3x_0^2x_2-x_2P_x +3y_0^2y_2-y_2P_y &=0 & ({\bf j}  \text{ coefficient}) \label{jco1}\\
    3x_0^2x_3-x_3P_x+3y_0^2y_3-y_3P_y &=0 & ({\bf k}  \text{ coefficient}) \label{kco1}
\end{align}

From Equation (\ref{re1}), we get $x_0^3 + y_0^3 \equiv 0 \bmod 3$; as the only cubes mod 9 are 0, 1, and 8, we immediately get $x_0^3 + y_0^3 \equiv 0 \bmod 9$.  Since $x_0^3 \equiv x_0 \bmod 3$, we also  get
\begin{equation} \label{xminy}
    x_0 + y_0 \equiv 0 \bmod 3.
\end{equation}

We can then examine Equation (\ref{re1}) mod 9 and simplify:
\begin{align}
    x_0^3-3x_0P_x+y_0^3-3y_0P_y & \equiv 3 \bmod 9 \notag\\
    -3x_0P_x-3y_0P_y & \equiv 3 \bmod 9 \notag\\    
    -x_0P_x-y_0P_y & \equiv 1 \bmod 3 \notag\\    
    -x_0P_x-y_0P_y & \equiv 1 \bmod 3 \notag\\    
    y_0(P_x-P_y) & \equiv 1 \bmod 3 \label{n31}   
\end{align}
If we first assume (without loss of generality) that $P_x \equiv 0 \bmod 3$.  Then $P_y \not\equiv 0 \bmod 3$, and Equations (\ref{ico1}), (\ref{jco1}), (\ref{kco1}) become
\begin{align*}
    -y_1P_y & \equiv 0 \bmod 3 \\
    -y_2P_y & \equiv 0 \bmod 3 \\
    -y_3P_y & \equiv 0 \bmod 3
\end{align*}
Therefore $y_1 \equiv y_2 \equiv y_3 \equiv 0 \bmod 3$, which implies that $P_y \equiv 0 \bmod 3$, a contradiction.  Therefore $P_x, P_y \not\equiv 0 \bmod 3$.

We additionally have from Equation (\ref{n31}) that $P_x \not\equiv P_y \bmod 3$, so assume $P_x \equiv 1 \bmod 3$ and $P_y \equiv 2 \bmod 3$.  From Equations (\ref{ico1}), (\ref{jco1}), and (\ref{kco1}) we have $x_n \equiv 2y_n \bmod 3$ for $n \in \{1,2,3\}$, which implies $x_n^2 \equiv y_n^2 \bmod 3$.  We then have 
\begin{align*}
    1 \equiv P_y - P_x & \equiv (ay_1^2 + by_2^2 + aby_3^2) - (ax_1^2 + bx_2^2 + abx_3^2) \bmod 3\\
    & \equiv a(y_1^2 - x_1^2) + b(y_1^2 - x_1^2) + ab(y_3^2 - x_3^2) \bmod 3 \\
    & \equiv 0 \bmod 3
\end{align*}
We therefore have the contradiction in this case, which completes the proof.
\end{proof}

\begin{prop} \label{3lb}
If $3 \mid a$ and $3 \mid b$, then $4$ cannot be written as the sum of 3 cubes in $LQ_{a,b}$.
\end{prop}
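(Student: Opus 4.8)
The plan is to mimic the structure of the proof of Proposition \ref{n3lb}, but to exploit the hypothesis $3 \mid a$ and $3 \mid b$ far more aggressively: it will turn out that the real coefficient equation \emph{alone} produces a contradiction, so the imaginary equations will not be needed at all. The underlying principle is that when $3 \mid a$ and $3 \mid b$, the non-commutative ``correction terms'' that normally complicate cube formulas become invisible modulo $9$, reducing the problem to a classical obstruction over $\mathbb{Z}$.

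First I would suppose for contradiction that $4 = x^3 + y^3 + z^3$ for some $x,y,z \in LQ_{a,b}$, and extract the real coefficient equation using Equation (\ref{cubeeq2}). Since $\text{Re}(\alpha^3) = \alpha_0^3 - 3\alpha_0 P_\alpha$, this gives
\[ (x_0^3 - 3x_0 P_x) + (y_0^3 - 3y_0 P_y) + (z_0^3 - 3z_0 P_z) = 4. \]
The key step is then to reduce this equation modulo $9$. Because $3 \mid a$ and $3 \mid b$, Equation (\ref{Pal}) shows that $3 \mid P_x$, $3 \mid P_y$, and $3 \mid P_z$; consequently each correction term $3x_0 P_x$ is divisible by $9$ and vanishes mod $9$. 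This collapses the quaternionic equation into the purely integral congruence
\[ x_0^3 + y_0^3 + z_0^3 \equiv 4 \bmod 9. \]

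Finally I would invoke the same observation already used for Equation (\ref{re1}): the only cubes modulo $9$ are $0$, $1$, and $-1$. A sum of three such residues lies in $\{-3,-2,\dots,3\}$, whose reductions mod $9$ form the set $\{0,\pm 1,\pm 2,\pm 3\}$; in particular the residue $4$ is never attained. This contradiction completes the proof.

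The main, and essentially only, obstacle is recognizing that the divisibility $3 \mid P_\alpha$ — a feature special to the case $3 \mid a$ and $3 \mid b$ — is precisely what kills the cross terms modulo $9$, thereby reducing the quaternionic statement to the classical fact that integers congruent to $\pm 4 \bmod 9$ are not sums of three integer cubes. Once that reduction is in hand, everything else is routine; no case analysis on the residues of $a$ and $b$ is required, in contrast to the upper-bound arguments.
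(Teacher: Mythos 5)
Your proposal is correct and follows exactly the paper's own argument: reduce the real-coefficient equation modulo $9$, note that $3 \mid P_x, P_y, P_z$ kills the correction terms, and conclude from the fact that $x_0^3 + y_0^3 + z_0^3 \equiv 4 \bmod 9$ has no integer solutions. No meaningful differences from the published proof.
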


\begin{proof}
Suppose $x, y, z \in LQ_{a,b}$ are such that $4= x^3 + y^3 + z^3$.  Examining the real coefficients of Equation (\ref{not3eq}), we get the following (similar to Equation (\ref{re1})):
\begin{equation} \label{re2}
   x_0^3-3x_0P_x+y_0^3-3y_0P_y + z_0^3-3z_0P_z = 4
\end{equation}

Note that since $3\mid a$ and $3\mid b$, we have $P_x \equiv P_y \equiv P_z \equiv 0 \bmod 3$; therefore Equation (\ref{re2}) becomes
\[    x_0^3+y_0^3 + z_0^3 \equiv 4 \bmod 9, \]
which has no integer solutions.
\end{proof}

Propositions \ref{n3abup}, \ref{3abup}, \ref{n3lb}, and \ref{3lb} then complete the proof of Theorem \ref{mainthm}.

\section{Acknowledgments}

The authors would like to thank the McDaniel College Student-Faculty Summer Research Fund and Research and Creativity Fund for supporting their research.

\bibliographystyle{plainnat}

\end{document}